\newtheorem{theorem}{Theorem}[section]
\newtheorem{lemma}[theorem]{Lemma}
\newtheorem{proposition}[theorem]{Proposition}
\newtheorem{corollary}[theorem]{Corollary}
\newtheorem{remark}[theorem]{Remark}
\newtheorem{example}[theorem]{Example}
\newtheorem{definition}[theorem]{Definition}
\numberwithin{equation}{section}
\newcommand{\supp}{{\ensuremath{\rm supp }}}
\newcommand{\depth}{{\ensuremath{\rm depth }}}
\begin{document}

\title[Certain Classes of Cohen-Macaulay Multipartite Graphs]{Certain Classes of Cohen-Macaulay Multipartite Graphs}

\author[R. Kumar]{Rajiv Kumar}

\address{Indian Institute of Technology Bombay, India.}

\email{gargrajiv00@gmail.com}
\author[A. Kumar]{Ajay Kumar}

\address{DAV University Jalandhar, India.}

\email{ajaychhabra.msc@gmail.com}

\subjclass[2010]{05E40, 13C14, 13D02}
\keywords{Posets, Multipartite Graphs, Cohen-Macaulay Graphs, Linear Resolutions}
\date{\today}

\maketitle

\begin{abstract}
The Cohen-Macaulay property of a graph arising from a poset has been studied by various authors. In this article, we study the Cohen-Macaulay property of a graph arising from a family of  reflexive and antisymmetric relations on a set. We use this result to find classes of multipartite graphs which are Cohen-Macaulay.  
\end{abstract}

\section{Introduction}

Graphs and simplicial complexes play an important role in combinatorial commutative algebra. In order to see the relationship between commutative algebra and combinatorics, one can associate monomial ideals to graphs or simplicial complexes.
Many authors have studied the connection between the algebraic properties of these ideals and the combinatorial properties of the corresponding combinatorial objects, see \cite[Chapter 9]{HHi}. In this article, our main focus is to study the edge ideal of a graph. A graph is called Cohen-Macaulay if the corresponding edge ideal is Cohen-Macaulay.

The Cohen-Macaulay property of graphs has been well studied for various classes. 
Herzog-Hibi (\cite{HH})
prove that a bipartite graph is Cohen-Macaulay if and only if it is arising from a poset. For a finite poset and $r, s\in \mathbb{N}$, Ene-Herzog-Mohammadi (\cite{EHM}) associated a monomial ideal generated in degree $s$, to the set of all multichains of length $r$ in a poset, and proved that this ideal is Cohen-Macaulay. Note that if $s=2$, then these ideals are edge ideals of some $r$-partite graphs.
Motivated by these results, we associate a monomial ideal to a family of posets, and find a class of Cohen-Macaulay $r$-partite graphs.

Our main tool in this article is the following well known relationship between the Stanley-Reisner ideal and its Alexander dual: The Stanley-Reisner ideal is Cohen-Macaulay if and only if its Alexander dual has a linear resolution. For more details see \cite[Theorem 3]{ER}.

This paper has been organized in the following manner. In Section \ref{Prelim}, we introduce the basic notions which are used throughout the article, more details can be found in \cite{HHi}. In Section \ref{Resolution}, we associate a monomial ideal $H_r(\mathcal{P})$ to a family $\mathcal{P}$ of partial order relations on a finite set. In Lemma \ref{lQLemma}, we prove that monomial ideal $H_r(\mathcal{P})$ has a linear resolution. This forces the Alexander dual of $H_r(\mathcal{P})$ to be Cohen-Macaulay. 

Section \ref{CMGraphs} is devoted to finding the classes of Cohen-Macaulay $r$-partite graphs.
In Theorem \ref{Alexander}, we see that the Alexander dual of $H_r(\mathcal{P})$ is an edge ideal of an $r$-partite graph associated to a family of reflexive and antisymmetric relations on a given set. Using this we find classes of Cohen-Macaulay graphs which are recorded in Theorems \ref{mainTheorem1} and \ref{mainTheorem2}.

\section{Preliminaries}\label{Prelim}
\subsection{Notation}
The following notation is used throughout the article.
\begin{enumerate}[i)]
	\item For $n\in \mathbb{N}$, we denote $[n]=\{1,\dots, n\}$.
	\item By $P_a$, we mean that the set $P$ with partial relation $\leq_a$.
	\item Let $S={\sf k}[X_1,\dots,X_n]$ be a polynomial ring with $\deg(X_i)=1$,  where ${\sf k}$ is a field. Then by $S(-j)$, we mean a graded free $S$-module of rank $1$ with $S(-j)_n=S_{n-j}$.
	\item Let $M$ and $N$ be graded $S$-modules. Then a homomorphism $\phi:M\longrightarrow N$ is called a \emph{graded homomorphism} if $\phi(M_n)\subset N_n$.
	\end{enumerate}
\subsection{Graphs and Edge Ideals} 
\begin{definition}{\rm\hfill{}\\\vspace*{-.5cm}
	\begin{enumerate}[ i)]
		\item A \emph{graph} $G=(V,E)$ is an ordered pair, where $V$ is the set of vertices of $G$ and $E$ is a collection of subsets of $V$ of cardinality $2$.
		\item An element of $E$ is called an \emph{edge} of $G$. For all $i, j\in V$, we say that $i$ is \emph{adjacent} to $j$ if and only if $\{i,j\}\in E$.
		\item For an integer $r\geq 2$, a graph $G$ is called an $r$-partite if there exists a partition of $V=V_1\cup\cdots\cup V_r$ such that for all $1\leq k \leq r$ and $i,j\in V_k$ implies that $i$ is not adjacent to $j$. If $r=2$, we say that $G$ is a \emph{bipartite} graph.  A bipartite graph on vertex set $V=V_1\cup V_2$ is called a \emph{complete bipartite graph} if $i$ and $j$ are adjacent for all $i\in V_1$ and $j\in V_2$.
		\item Let $G$ be a graph on a vertex set $V$ and $W\subset V$. Then a graph $H$ is called a \emph{induced subgraph} of $G$ on $W$ if for $i,j\in W$, $i$ and $j$ are adjacent in $H$ if and only if so in $G$.
		\item Let $S={\sf k}[X_1,\dots, X_n]$ be a polynomial ring over ${\sf k}$ and $G$ be a graph on a vertex set $V=[n]$. Then the monomial ideal $I(G)=\langle X_iX_j: \{i,j\}\in E\rangle$ is called the \emph{edge ideal} of $G$. A graph $G$ is called \emph{Cohen-Macaulay} if $S/I(G)$ is Cohen-Macaulay. 
 	\end{enumerate}
}\end{definition}

\subsection{Simplicial Complexes}\label{SimplicialComplexes}  

\begin{definition}{\rm For fixed $n\in \mathbb{N}$, let $V=[n]$.
		\begin{enumerate}[i)]
			\item A \emph{simplicial complex} on $V$, denoted by $\Delta$ or $\Delta_V$,  is a collection of subsets of $V$ with the following properties:
			\begin{enumerate}[a)]
				\item $\phi \in \Delta$ and $\{i\}\in \Delta$ for all $i\in V$.
				\item If $F\in \Delta$ and $G\subset F$, then $G\in \Delta$.
			\end{enumerate}
			\item An element of $\Delta$ is called a \emph{face} of $\Delta$, and a maximal face with respect to inclusion is called a \emph{facet}.
			\item A subset $F\subset V$ is called a \emph{nonface} of $\Delta$ if $F\notin \Delta$, and it is called a \emph{minimal nonface} if it is minimal with respect to inclusion.
			\item The \emph{Alexander dual} of $\Delta$, denoted by $\Delta^{\vee}$, is defined as 
			$$\Delta^{\vee}=\{V\setminus F: F \text{ is a nonface of } \Delta \}.$$
			\item Let $\Delta$ be a simplicial complex on $[n]$ and $S={\sf k}[X_1,\dots, X_n]$. The \emph{Stanley-Reisner ideal} of $\Delta$ is the squarefree monomial ideal, denote as $I_{\Delta}$, is defined as follows:
			$$I_{\Delta}=\left\langle X_{i_1}\cdots X_{i_r}:\{i_1,\dots, i_r\} \text{ is a minimal nonface of } \Delta\right\rangle.$$
			Further, let $\Delta^{\vee}$ be the Alexander dual of $\Delta$. Then we say that $I_{\Delta^{\vee}}$ is the \emph{Alexander dual} of $I_{\Delta}$, and denote it by $I_{\Delta}^{\vee}$.
		\end{enumerate}
}\end{definition}

\subsection{Free Resolution} Let $S={\sf k}[X_1,\dots, X_n]$ and $I$ be a homogeneous ideal of $S$. Then a \emph{free resolution} of $I$ over $S$ is an exact sequence

$${\sf F}_{\bullet}:\cdots\longrightarrow F_n\overset{\phi_n}\longrightarrow F_{n-1}\longrightarrow\cdots\longrightarrow F_0\longrightarrow I\longrightarrow0$$
such that for each $i\geq 0$, $F_i$ is a graded free $S$-module and $\phi_i$ is a graded homomorphism. 
Further, if $I$ is generated in degree $d$ and $F_i\simeq S(-d-i)^{\beta_i}$ for some $\beta_i\in \mathbb{N}$ and for all $i$, then we say that $I$ has a \emph{linear resolution}.  

\section{Linear Resolution}\label{Resolution}
In this section, we associate a monomial ideal $H_r(\mathcal{P})$  to a family of posets $\mathcal{P}$, and we show that this ideal has a linear resolution.

\begin{definition}{\rm
		Let $(P,\leq)$ be a finite partial ordered set. A subset $I\subset P$ is called a \emph{poset ideal} if for all $p\in I$ and $q\in P$ with $q\leq p$ implies that $q\in I$. 
}\end{definition}
For a given set $P=\{p_1,\dots, p_n\}$ and an integer $r \geq 2$, we consider a family of  partial ordered relations $\mathcal{P}=\{  \leq_a:a \in [r-1] \}$ on $P$. For the sake of simplicity, we denote $(P,\leq_a)$ by $P_a$. Let $J(P_a)$ denotes the set of poset ideals of $P_a$. Now corresponding to a family $\mathcal{P}$, we define a set
\begin{align*}
K_r(\mathcal{P})=
\lbrace {\mathbf{I}}=\left( I_1,I_2,\ldots, I_{r-1} \right): I_a\in J(P_a) ~\forall~ a\in [r-1]\text{ and } I_{r-1}\subset \cdots \subset I_1 \rbrace,
\end{align*}
with a partial order $\prec$ given by ${\mathbf{J}} \prec {\mathbf{I}} \text{ if and only if } J_a \subset I_a~\forall~ a.$ \\
Let $S={\sf k}[{\bf X}_1,{\bf X}_2,\ldots,{\bf X}_r],$ where  $\mathbf{X}_a=\{X_{a,1},\ldots,X_{a,n}\}$  for all $a \in [r]$. For ${\mathbf{I}}\in K_r(\mathcal{P}),$ we associate a squarefree monomial 
$$
u_{{\mathbf{I}}}=u_{I_1}u_{I_2}\cdots u_{I_{r-1}}, \text{ where } u_{I_a}= \left(\prod_{p_i\in I_a}X_{a,i}\prod_{p_i \in P_a \setminus I_a}X_{{a+1},i}   \right) \text{ for all } 1\leq a \leq r-1.$$
Let $H_r(\mathcal{P})=\left\langle \{ u_{{\mathbf{I}}} \}_{\mathbf{I}\in K_r(\mathcal{P})} \right\rangle$ be the squarefree monomial ideal of the polynomial ring $S$, generated by monomials $u_{{\mathbf{I}}}$, where $\mathbf{I}\in K_r(\mathcal{P}) $. 

\begin{example}\label{posetideal}{\rm
		Let $P=\{p_1,p_2,p_3\}$ and $r=3$. Suppose we have the following partial order relations on $P$:
		\[
		\begin{tikzpicture}[baseline=(current bounding box.north), level/.style={sibling distance=50mm}]	
		\tikzstyle{edge} = [draw,thick,-]	
		\draw[-] (0,0) -- (0,1.5);
		\draw [fill] (0,0) circle [radius=0.08];
		\draw [fill] (0,1.5) circle [radius=0.08];
		\draw [fill] (2,0) circle [radius=0.08];		
		\node [above] at (0,1.5) { $p_3$};
		\node [below] at (0,0) { $p_2$};
		\node [above] at (2,0) { $p_1$};
		\node[below] at (1,-.3){$(P,\leq_1)$};
		
		\draw[-] (5,0) -- (5,1.5);
		\draw [fill] (5,0) circle [radius=0.08];
		\draw [fill] (5,1.5) circle [radius=0.08];	
		\draw [fill] (7,0) circle [radius=0.08];		
		\node [above] at (5,1.5) { $p_2$};
		\node [below] at (5,0) { $p_1$};
		\node [above] at (7,0) { $p_3$};
		\node[below] at (6,-.3){$(P,\leq_2)$};
		
		\end{tikzpicture}
		\]
Then the collection of poset ideals of $P_1$ is $J(P_1)=\{\emptyset, \{ p_1\}, \{ p_2\},\{p_1,p_2\}, \{p_2,p_3 \},P \}$ and, of $P_2$  is $J(P_2)=\{\emptyset, \{ p_1\}, \{ p_3\},\{p_1,p_2\}, \{p_1,p_3 \},P \}$. Also, We see that 

\[
K_3(\mathcal{P})=\left\{
\begin{array}{c}
	(\phi, \phi),(\{ p_1\}, \phi ), (\{ p_1\}, \{ p_1\} ), (\{ p_2\}, \phi ), (\{ p_1, p_2\}, \phi ), \\
	 (\{ p_1, p_2\}, \{ p_1\}), 
	(\{ p_1, p_2\}, \{p_1,p_2\}), 
	(\{ p_2, p_3\}, \phi ), 
	(\{ p_2, p_3\}, \{ p_3\} ), \\
 (P, \phi ), 
	(P,\{ p_1\}), 
	(P,\{ p_3\}), (P,\{p_1,p_2\}), (P,\{p_1,p_3\}), (P,P) \}
\end{array} \right\},
\]
and hence $H_3(\mathcal{P})$ is generated by the following set of squarefree monomials
\[
\left\{
\begin{array}{c}
 X_{2,1}X_{2,2}X_{2,3}X_{3,1}X_{3,2}X_{3,3}, X_{1,1}X_{2,2}X_{2,3}X_{3,1}X_{3,2}X_{3,3},X_{1,1}X_{2,1}X_{2,2}X_{2,3}X_{3,2}X_{3,3}, \\
X_{1,2}X_{2,1}X_{2,3}X_{3,1}X_{3,2},X_{3,3},
X_{1,1}X_{1,2}X_{2,3}X_{3,1}X_{3,2}X_{3,3}, X_{1,1}X_{1,2}X_{2,1}X_{2,3}X_{3,2}X_{3,3}, \\
X_{1,1}X_{1,2}X_{2,1}X_{2,2}X_{2,3}X_{3,3}, X_{1,2}X_{1,3}X_{2,1}X_{3,1}X_{3,2}X_{3,3}, 
,X_{1,2}X_{1,3}X_{2,1}X_{2,3}X_{3,1}X_{3,2} \\
X_{1,1}X_{1,2}X_{1,3}X_{3,1}X_{3,2}X_{3,3},  
X_{1,1}X_{1,2}X_{1,3}X_{2,1}X_{3,2}X_{3,3}, X_{1,1}X_{1,2}X_{1,3}X_{2,3}X_{3,1}X_{3,2}, \\
X_{1,1}X_{1,2}X_{1,3}X_{2,1}X_{2,2}X_{3,3},
X_{1,1}X_{1,2}X_{1,3}X_{2,1}X_{2,3}X_{3,2}, X_{1,1}X_{1,2}X_{1,3}X_{2,1}X_{2,2}X_{2,3}
\end{array}\right\}
\]
in the polynomial ring ${\sf k}[X_{a,i}, 1 \leq a,i \leq 3]$.
}
\end{example}

Note that For $r=2$, Herzog-Hibi (\cite{HH}) proved that the ideal $H_2(\mathcal{P})$ has a linear resolution. In fact, if all partial order relations $\leq_a$ are same for $a\in [r-1]$, then Ene-Herzog-Mohammadi (\cite{EHM}) studied the ideal $H_r(\mathcal{P})$, and proved that it has a linear resolution. More generally, in Proposition \ref{linear}, we prove that $H_r(\mathcal{P}) $ has a linear resolution.

Let $G(H_r(\mathcal{P}))$ denotes the minimal set of monomial generators of the monomial ideal $H_r(\mathcal{P}).$ Define a partial order on $G(H_r(\mathcal{P}))$ by $u_{\mathbf{J}}\prec u_{\mathbf{I}}$ if $\mathbf{J}\prec \mathbf{I}.$ We fix a total order $\prec'$ on $G(H_r(\mathcal{P}))$, which extends the partial order $\prec $. For more details see \cite[Theorem 1.1]{ES}. 

 In order to prove that $H_r(\mathcal{P})$ has a linear resolution, we use the following remark.
\begin{remark}\label{lQLemma}{\rm
Let $I$ be a monomial ideal with monomial generators $u_1,\dots,u_m$ of $I$. Suppose for all $j<i$, there exists an integer $k<i$ and an integer $l$ such that 

$$\dfrac{u_k}{\gcd(u_k,u_i)}=X_l \quad \text{and }\quad X_l \quad \text{divides} \quad \dfrac{u_j}{\gcd(u_j,u_i)}.$$ 

Then, by \cite[Theorem 8.2.1 and Lemma 8.2.3]{HHi}  $I$ has a linear resolution.
}\end{remark}

\begin{proposition}\label{linear}
The squarefree monomial ideal $H_r(\mathcal{P})$ has a linear resolution.% quotients. In particular $H_r(\mathcal{P})$ has a linear resolution.
\end{proposition}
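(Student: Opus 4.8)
The plan is to exhibit a total order $\prec'$ on $G(H_r(\mathcal{P}))$ extending $\prec$ and to verify the purely combinatorial criterion of Remark \ref{lQLemma} for it; by that remark this yields the linear resolution. Throughout I adopt the conventions $I_0=P$ and $I_r=\emptyset$, so that every $\mathbf{I}\in K_r(\mathcal{P})$ gives a chain $P=I_0\supseteq I_1\supseteq\cdots\supseteq I_{r-1}\supseteq I_r=\emptyset$.

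First I would reformulate the generators. Since $I_a\subseteq I_{a-1}$, a direct inspection of $u_{\mathbf I}$ shows that for each fixed $i$ exactly one of the variables $X_{1,i},\dots,X_{r,i}$ fails to divide $u_{\mathbf I}$, namely $X_{a,i}$ for the unique index $a$ with $p_i\in I_{a-1}\setminus I_a$. This defines a ``level map'' $\alpha_{\mathbf I}\colon[n]\to[r]$, records that all generators have the common degree $(r-1)n$ (so no one divides another and $\{u_{\mathbf I}\}$ is indeed the minimal generating set), and encodes the combinatorics cleanly: $p_i\in I_a\iff a<\alpha_{\mathbf I}(i)$, and $\mathbf I\in K_r(\mathcal P)$ if and only if for every $a\in[r-1]$ and every relation $p_j\le_a p_i$ one has $\alpha_{\mathbf I}(i)>a\Rightarrow\alpha_{\mathbf I}(j)>a$ (this is just ``$I_a$ is a $\le_a$-poset ideal''; the chain condition is automatic). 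In these terms, writing $D=\{i:\alpha_{\mathbf I}(i)\ne\alpha_{\mathbf J}(i)\}$, a short squarefree computation gives $u_{\mathbf J}/\gcd(u_{\mathbf J},u_{\mathbf I})=\prod_{i\in D}X_{\alpha_{\mathbf I}(i),i}$.

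Next I reduce the criterion to a single combinatorial move. Given $\mathbf J\prec'\mathbf I$, I want $\mathbf K\prec'\mathbf I$ and a variable $X_l$ with $u_{\mathbf K}/\gcd(u_{\mathbf K},u_{\mathbf I})=X_l$ and $X_l\mid u_{\mathbf J}/\gcd(u_{\mathbf J},u_{\mathbf I})$. By the colon computation it suffices to find an index $i_0\in D$ for which \emph{lowering} the level of $p_{i_0}$ from $b:=\alpha_{\mathbf I}(i_0)$ to $b-1$ (that is, replacing $I_{b-1}$ by $I_{b-1}\setminus\{p_{i_0}\}$ and leaving the other $I_a$ unchanged) again lands in $K_r(\mathcal P)$: such a $\mathbf K$ differs from $\mathbf I$ only at $i_0$, satisfies $\mathbf K\prec\mathbf I$ hence $\mathbf K\prec'\mathbf I$, and has $u_{\mathbf K}/\gcd(u_{\mathbf K},u_{\mathbf I})=X_{\alpha_{\mathbf I}(i_0),i_0}$, which divides $u_{\mathbf J}/\gcd(u_{\mathbf J},u_{\mathbf I})$ precisely because $i_0\in D$. (Raising the level would produce the same variable but give $\mathbf K\succ\mathbf I$, which is disallowed, so lowering is the relevant move.) Using the validity criterion above, this lowering is legal exactly when $p_{i_0}$ is a maximal element of $I_{b-1}$ with respect to $\le_{b-1}$.

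The main obstacle, and the heart of the argument, is producing such an $i_0$. Here I would exploit that $\prec'$ extends $\prec$: since $\mathbf J\prec'\mathbf I$ and $\mathbf I\ne\mathbf J$, the set $D^{>}:=\{i:\alpha_{\mathbf I}(i)>\alpha_{\mathbf J}(i)\}$ is nonempty, for otherwise $I_a\subseteq J_a$ for all $a$ would give $\mathbf I\prec\mathbf J$ and hence $\mathbf I\prec'\mathbf J$. I then choose $i_0\in D^{>}$ with $b=\alpha_{\mathbf I}(i_0)$ as large as possible, and, among those, with $p_{i_0}$ maximal for $\le_{b-1}$ (so in particular $b\ge2$). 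To see $p_{i_0}$ is maximal in all of $I_{b-1}$, suppose some $p_i\in I_{b-1}$ (equivalently $\alpha_{\mathbf I}(i)\ge b$) satisfies $p_{i_0}<_{b-1}p_i$. Applying the validity criterion for $\mathbf J$ to this relation at $a=b-1$, together with $\alpha_{\mathbf J}(i_0)\le b-1$ (as $i_0\in D^{>}$), forces $\alpha_{\mathbf J}(i)\le b-1<b\le\alpha_{\mathbf I}(i)$, so $i\in D^{>}$ with level $b$; this contradicts the double-maximal choice of $i_0$. Hence the lowering is legal, the criterion of Remark \ref{lQLemma} holds along $\prec'$, and $H_r(\mathcal P)$ has a linear resolution. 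I expect the only delicate point to be this double-extremal selection of $i_0$ together with the use of the antisymmetry of each $\le_a$ to rule out cycles among candidate elements; the rest is bookkeeping with the level map.
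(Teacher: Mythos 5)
Your proposal is correct and takes essentially the same route as the paper: both verify the criterion of Remark \ref{lQLemma} along the total order $\prec'$ by removing a single maximal element from the highest-index component where $\mathbf{I}$ exceeds $\mathbf{J}$. Your level-map formalism is just a repackaging of the paper's bookkeeping --- your maximal level $b$ satisfies $b-1=\max\{q: J_q\cap I_q\subsetneq I_q\}$, which is exactly the paper's index $a$, your candidate set $\{p_i: i\in D^{>},\ \alpha_{\mathbf{I}}(i)=b\}$ coincides with the paper's $I_a\setminus(J_a\cap I_a)$, and both arguments use the poset-ideal property of $J_{b-1}$ to certify that the chosen element is $\leq_{b-1}$-maximal in all of $I_{b-1}$ before deleting it to form $\mathbf{K}$.
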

\begin{proof}
 Let ${\mathbf{J}}\prec'{\mathbf{I}}$ with ${\mathbf{J}}=\left(J_1,J_2,\ldots,J_{r-1} \right),\ {\mathbf{I}}=\left(I_1,I_2,\ldots,I_{r-1}  \right).$ For $a\in[r-1]$, define $J'_a= J_a\cap I_a$ and ${\bf J}'=(J_1',\dots, J_{r-1}' )$.  Since $J_a'\subset I_a$ for $a\in[r-1]$, we have ${\bf J'}\prec{\bf I}$.

 Then take $a=\max \{q:J_q'\subsetneq I_q\}.$  Since $J_a'\subsetneq I_a$ are poset ideals of $P_a$, there exists a $p_i\in I_a\setminus J_a'$ such that $p_i$ is a maximal element in $I_a$. This forces that  $\delta_a=I_a\setminus \{ p_i\}$ is a poset ideal of $P_a$. This gives us $u_{\delta_a}=\dfrac{u_{I_a}X_{a+1,i}}{X_{a,i}}$ which implies that $X_{a+1,i}=\dfrac{u_{\delta_a}}{\gcd(u_{\delta_a},u_{I_a})}.$

Our claim is 
${\mathbf{K}}=\left(I_1,I_2,\ldots,I_{a-1},\delta_a,I_{a+1},\ldots,I_{r-1}  \right)\in K_r(\mathcal{P})$. Since $\delta_a\subsetneq I_a$, to prove the claim it is enough to prove that $I_{a+1}\subset \delta_a$. The fact that $J_{a+1}'\subset J_a'$ and $p_i\notin J_a'$ implies that $p_i\notin J_{a+1}',$ and by choice of $a$, we know that $J_{a+1}'=I_{a+1}$. The claim follows from the fact that $\delta_a=I_a\setminus p_i$.

Since we know that $p_i\in I_a$ and $p_i\notin J_{a+1}'=I_{a+1},$ we get $X_{a+1,i}$ does not divide $u_{\mathbf{I}}.$ Now, $p_i \notin J_a'$ and $p_i \in I_a$ and hence $p_i \notin J_a$. This forces that  $X_{a+1,i}$ divides $u_{J_a}$, and hence we get  
$X_{a+1,i}$ divides $\dfrac{u_{\mathbf{J}}}{\gcd(u_{\mathbf{J}},u_{\mathbf{I}})}.$

Finally, from the definition of ${\bf K}$, we get $\dfrac{u_{\bf K}}{\gcd(u_{\bf I},u_{\bf K})}=\dfrac{u_{\delta_a}}{\gcd(u_{I_a},u_{\delta_a})}=X_{a+1,i}$, and hence
the proof follows from Remark \ref{lQLemma}.
\end{proof}

\section{Cohen Macaulay Multipartite Graphs}\label{CMGraphs}
For $a\in [r-1]$ and $P=\{ p_1,p_2, \ldots, p_{n}\}$, let $\leq_a$ be a partial order relation on $P$ such that if $p_i \leq_a p_j$, then we have $i \leq j.$
In this case, we prove that the Alexander dual of $H_r(\mathcal{P})$ is the edge ideal of some $r$-partite graph. Using this we identify two classes of Cohen-Macaulay graphs.
  In order to find $H_r(\mathcal{P})^{\vee}$, we define the following relation on $P$:
For $1\leq a\leq b\leq r-1$, we define a relation $\leq_{[a,b]}$ on $P$ as follows:
$p_{i}\leq_{[a,b]} p_{j}$~ if there exists a non-decreasing sequence $(t_1,t_2,\ldots,t_k)$ such that  $$p_{i} \leq_{a} p_{t_1} \leq_{a+1}\cdots\leq_{b-1}p_{t_k}\leq_{b}p_{j}.$$ 

\begin{example}\label{posetExample}{\rm
		Let $\leq_1$ and $\leq_2$ be partial order relations on a set $P=\{p_1,p_2,p_3\}$ as defined in the Example \ref{posetideal}.
		Then the relation $\leq_{[1,2]}$ on $P$ is shown as in the following diagram. 
		\[
		\begin{tikzpicture}[baseline=(current bounding box.north), level/.style={sibling distance=50mm}]
		\tikzstyle{edge} = [draw,thick,-]		
		\draw[-] (0,0) -- (0,1.5);
		\draw[-] (2,0) -- (2,1.5);
		\draw [fill] (0,0) circle [radius=0.08];
		\draw [fill] (0,1.5) circle [radius=0.08];		
		\draw [fill] (2,1.5) circle [radius=0.08];		
		\draw [fill] (2,0) circle [radius=0.08];		
		\node [above] at (0,1.5) { $p_2$};
		\node [below] at (0,0) { $p_1$};
		\node [above] at (2,1.5) {$p_3$};	
		\node [below] at (2,0) { $p_2$};
		\node[below] at (1,-.3){$(P,\leq_{[1,2]})$};
		\end{tikzpicture}
		\]		
}\end{example}

For a simplicity, we denote $P_{[a,b]}=(P,\leq_{[a,b]})$. Since $\leq_a$ is a reflexive relation on $P$ for all $a\in [r-1]$, by definition of $\leq_{[a,b]}$ it follows that so is $\leq_{[a,b]}$. Further, the relation $\leq_{[a,b]}$ on $P$ is antisymmetric follows from the fact that if $p_i \leq_a p_j$, then we have $i \leq j.$ In Example \ref{posetExample} observe that $\leq_{[1,2]}$ is not a transitive relation on $P$, and hence 
 $P_{[a,b]}$ need not be a poset.

 \begin{definition}{\rm\hfill{}\\\vspace*{-.5cm}
 		\begin{enumerate}[i)]
 			\item Let $u$ be a monomial in $S$. Then \emph{support} of $u$, denoted by $\supp(u)$, is defined as $$\supp(u)=\{X_{a,i}: X_{a,i}\text{ divides }  u\}.$$ 
 			\item Let $V=\bigcup\limits_{a=1}^rV_a$, where $V_a=\{X_{a,1},\dots,X_{a,n}\}$. For $F\subset V$,  define $F_{{\bf X}_a}=\{X_{1,i}:X_{a,i}\in F\}$ for $a\leq r$.
 			\item For $F\subset V$, we set $ \gamma^F_r =\phi.$ For $2\leq a\leq r$, we define a poset ideal, denoted as $\gamma^F_{a-1}$, generated by $\{p_i: X_{1,i} \in F_{{\bf X}_{a}} \}\cup \gamma^F_{a}$  in $P_{a-1}$. Note that $\gamma_{a+1}^F\subset \gamma_{a}^F$ for all $a\in [r-1]$, so we denote $\boldsymbol{\gamma}^F=(\gamma_1^F,\ldots,\gamma_{r-1}^F)\in K_r(\mathcal{P})$.
 		\end{enumerate} 
 }\end{definition}
 \begin{lemma}\label{remark}
 	Let $p_i\in \gamma^F_a$ for some $a\in[r-1]$. Then there exists $p_j$ such that $X_{1,j}\in F_{{\bf X}_b}$ for some $a+1\leq b\leq r$ with $p_i\leq_{[a,b-1]}p_j$. 
 \end{lemma}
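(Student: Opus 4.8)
The plan is to argue by downward induction on $a$, running from $a=r-1$ down to $a=1$, unwinding the recursive definition of the poset ideals $\gamma^F_a$. The observation used throughout is that, by definition, $\gamma^F_a$ is the poset ideal of $P_a$ generated by $\{p_s : X_{1,s}\in F_{\mathbf{X}_{a+1}}\}\cup\gamma^F_{a+1}$; hence $p_i\in\gamma^F_a$ means precisely that $p_i\leq_a q$ for some generator $q$, where either $q=p_s$ with $X_{1,s}\in F_{\mathbf{X}_{a+1}}$, or $q\in\gamma^F_{a+1}$. I also record the trivial fact that $\leq_{[a,a]}$ coincides with $\leq_a$ (take the empty sequence, $k=0$, in the definition of $\leq_{[a,b]}$).

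For the base case $a=r-1$, since $\gamma^F_r=\phi$, the ideal $\gamma^F_{r-1}$ is generated in $P_{r-1}$ by $\{p_s : X_{1,s}\in F_{\mathbf{X}_r}\}$ alone. Thus $p_i\leq_{r-1}p_j$ for some $p_j$ with $X_{1,j}\in F_{\mathbf{X}_r}$, and taking $b=r$ yields $p_i\leq_{[r-1,r-1]}p_j$, which is exactly the claim.

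For the inductive step, suppose $a\leq r-2$ and that the statement holds for $a+1$. Given $p_i\in\gamma^F_a$, write $p_i\leq_a q$ for a generator $q$ as above. If $q=p_s$ with $X_{1,s}\in F_{\mathbf{X}_{a+1}}$, then $b=a+1$ and $p_j=p_s$ work, because $p_i\leq_a p_j$ is the same as $p_i\leq_{[a,a]}p_j$, i.e.\ $p_i\leq_{[a,b-1]}p_j$. If instead $q\in\gamma^F_{a+1}$, I apply the induction hypothesis to $q$: there is a $p_j$ with $X_{1,j}\in F_{\mathbf{X}_b}$ for some $a+2\leq b\leq r$ together with a chain
$$q\leq_{a+1}p_{t_1}\leq_{a+2}\cdots\leq_{b-1}p_j.$$
Prepending the relation $p_i\leq_a q$ produces the chain
$$p_i\leq_a q\leq_{a+1}p_{t_1}\leq_{a+2}\cdots\leq_{b-1}p_j,$$
which is a witness for $p_i\leq_{[a,b-1]}p_j$ with $a+1\leq b\leq r$, as required.

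The only point requiring care — and the step I expect to be the main (if minor) obstacle — is verifying that the concatenated chain is a legitimate witness for $\leq_{[a,b-1]}$: one must check that the sequence of indices obtained by inserting (the index of) $q$ in front of $(t_1,\dots,t_k)$ is still non-decreasing. This is automatic from the standing hypothesis that $p_i\leq_c p_j$ forces $i\leq j$ for every relation $\leq_c$: the steps $p_i\leq_a q$ and $q\leq_{a+1}p_{t_1}$ pin the index of $q$ between those of $p_i$ and $p_{t_1}$, so the enlarged index sequence is monotone and the order labels $a,a+1,\dots,b-1$ occur in the required increasing order.
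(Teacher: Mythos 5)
Your proof is correct and is essentially the paper's argument: the paper also unwinds the recursive definition of $\gamma^F_a$, picking at each stage a (maximal, hence generating) element $p_j$ with $p_i\leq_a p_j$ that either has $X_{1,j}\in F_{\mathbf{X}_{a+1}}$ or lies in $\gamma^F_{a+1}$, and "repeats this procedure" until it terminates at $\gamma^F_r=\phi$ — your downward induction on $a$ is just a cleaner formalization of that iteration, and your appeal to the standing hypothesis ($p_i\leq_c p_j\Rightarrow i\leq j$) to get the non-decreasing index sequence makes explicit a point the paper leaves implicit.
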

 \begin{proof}
 Since $p_i\in \gamma^F_a$, there exists a maximal element $p_j \in \gamma^F_a$ with $p_i \leq _a p_j$. Now by definition of $\gamma^F_a$, it follows that either $X_{1,j} \in F_{\mathbf{X}_{a+1}}$ or $p_j \in \gamma^F_{a+1}$.  If $X_{1,j} \in F_{\mathbf{X}_{a+1}}$, then we are through. Otherwise there exists a maximal element $p_k \in \gamma^F_{a+1} $ with $p_j \leq _{a+1} p_k$. Again we have either $X_{1,k} \in F_{\mathbf{X}_{a+2}}$ or $p_k \in \gamma^F_{a+2}$. Thus, we repeat this procedure till we get the desired result.
 \end{proof}
 
\begin{theorem}\label{Alexander}
The monomial ideal $H_r(\mathcal{P})^{\vee}$ is minimally generated by the squarefree monomials of type $X_{s,i}X_{t,j}$, $1\leq s<t\leq r$ if and only if $p_i\leq_{[s,t-1]} p_j.$
\end{theorem}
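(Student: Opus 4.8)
The plan is to translate the statement into a question about ``transversals'' of the supports of the generators of $H_r(\mathcal{P})$, and then to read off the answer from the bookkeeping already set up in $\gamma^F$ and Lemma \ref{remark}. It is well known (see \cite{HHi}) that for a squarefree monomial ideal $I$, a squarefree monomial $m$ lies in $I^{\vee}$ if and only if $\supp(m)\cap\supp(u)\neq\emptyset$ for every minimal generator $u$ of $I$. Hence the minimal generators of $H_r(\mathcal{P})^{\vee}$ are exactly the minimal subsets $F\subset V$ meeting $\supp(u_{\mathbf{I}})$ for all $\mathbf{I}\in K_r(\mathcal{P})$, and the whole proof reduces to describing these minimal sets.

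First I would pin down the shape of a single support. Writing $I_0=P$ and $I_r=\emptyset$, a direct check from the definition of $u_{\mathbf{I}}$ shows that $X_{a,i}\notin\supp(u_{\mathbf{I}})$ precisely when $p_i\in I_{a-1}\setminus I_a$. Since $P=I_0\supseteq I_1\supseteq\cdots\supseteq I_{r-1}\supseteq I_r=\emptyset$ is a chain, for each column $i$ there is exactly one index $a=:\sigma_{\mathbf{I}}(i)$ with $X_{a,i}\notin\supp(u_{\mathbf{I}})$; thus $u_{\mathbf{I}}$ omits exactly one variable in each column, and $\mathbf{I}\mapsto\sigma_{\mathbf{I}}$ identifies $K_r(\mathcal{P})$ with those functions $\sigma\colon[n]\to[r]$ whose every cut $\{p_k:\sigma(k)>a\}$ ($1\leq a\leq r-1$) is a poset ideal of $P_a$. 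In this language a pair $\{X_{s,i},X_{t,j}\}$ with $s<t$ fails to meet some support if and only if there is an admissible $\sigma$ with $\sigma(i)=s$ and $\sigma(j)=t$.

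Next I would check that the listed monomials are minimal generators. Fix $s<t$ with $p_i\leq_{[s,t-1]}p_j$. To see $\{X_{s,i},X_{t,j}\}$ meets every support, argue contrapositively: if an admissible $\sigma$ had $\sigma(i)=s$, then $p_i\notin I_s$, and feeding the chain $p_i\leq_s p_{t_1}\leq_{s+1}\cdots\leq_{t-1}p_j$ through the downward closed sets $I_s\supseteq I_{s+1}\supseteq\cdots\supseteq I_{t-1}$ forces $p_j\notin I_{t-1}$, that is $\sigma(j)\leq t-1<t$; so no admissible $\sigma$ realizes $(s,t)$ and the pair meets every support. Minimality is immediate, since the constant function $\sigma\equiv a$ is admissible (its cuts are $P$ or $\emptyset$), so no single variable $X_{a,i}$ is a transversal, whence the pair is a minimal such set.

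The remaining, and main, step is to show that every transversal $F$ contains one of the listed pairs; this is exactly what $\gamma^F$ and Lemma \ref{remark} are designed for, and I expect it to be the delicate point. Suppose $F$ meets every support but contains no pair $\{X_{s,i},X_{t,j}\}$ with $s<t$ and $p_i\leq_{[s,t-1]}p_j$. I would take the witnessing generator $u_{\boldsymbol{\gamma}^F}$ and show $\supp(u_{\boldsymbol{\gamma}^F})\cap F=\emptyset$, a contradiction. Fix $X_{a,i}\in F$. The injection in the definition of $\gamma^F$ makes $p_i$ a generator of $\gamma_{a-1}^F$, so $p_i\in\gamma_{a-1}^F$ and hence $\sigma_{\boldsymbol{\gamma}^F}(i)\geq a$; on the other hand, if $p_i\in\gamma_a^F$ then Lemma \ref{remark} produces some $X_{b,k}\in F$ with $b>a$ and $p_i\leq_{[a,b-1]}p_k$, a forbidden pair, so in fact $p_i\notin\gamma_a^F$ and $\sigma_{\boldsymbol{\gamma}^F}(i)=a$. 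Thus $X_{a,i}$ is the variable omitted by $u_{\boldsymbol{\gamma}^F}$ in column $i$; as this holds for every element of $F$, the support is disjoint from $F$. (Here the reflexivity of $\leq_{[s,t-1]}$ noted before the statement guarantees that the no-bad-pair hypothesis already forbids two entries of $F$ in one column, keeping $\sigma_{\boldsymbol{\gamma}^F}(i)$ unambiguous.) Combining the two directions, the listed monomials lie in $H_r(\mathcal{P})^{\vee}$ and are minimal, while any minimal generator has support a minimal such $F$ and therefore equals one of them.
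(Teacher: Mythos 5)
Your proof is correct and takes essentially the same approach as the paper: the same description of the supports $\supp(u_{\mathbf{I}})$, the same witnessing generator $u_{\boldsymbol{\gamma}^F}$, and the same use of Lemma \ref{remark}, with the paper's ``minimal nonface of $\Delta_P^{\vee}$'' being literally your ``minimal transversal''. The only differences are organizational: you run the hard direction as a single contrapositive (no forbidden pair in $F$ implies $F\cap\supp(u_{\boldsymbol{\gamma}^F})=\emptyset$, using reflexivity to rule out two entries of $F$ in one column) where the paper instead splits into two cases according to which block $F_{\mathbf{X}_a}$ fails to be contained in $\bigl(\supp(w/u_{\boldsymbol{\gamma}^F})\bigr)_{\mathbf{X}_a}$, and you make explicit the singleton check (via the constant function $\sigma\equiv a$) that the paper leaves implicit when asserting that two-element nonfaces are automatically minimal.
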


\begin{proof}
Let $V=\bigcup\limits_{a=1}^rV_a$ be a vertex set and $\Delta_P$ be a simplicial complex on $V$ such that $I_{\Delta_P}=H_r(\mathcal{P}).$ Set $w$ to be the product of all variables. 
By the definition of $\Delta_P^{\vee}$, facets of $\Delta_P^{\vee}$ are given by $\supp(w/u_{\mathbf{I}})$, where ${\mathbf{I}} \in K_r(\mathcal{P})$. Now, by definition of $u_{\bf I}$, it follows that $X_{a,i}$ divides $u_{\mathbf{I}}$ if and only if $p_i\notin I_{a-1}$ or $p_i\in I_a$. 
Also, we know that $$\left( \supp({w/u_{\mathbf{I}}}) \right)_{\mathbf{X}_a}=\{X_{1,i}: X_{a,i} \text{ does not divide } u_{\bf I}\},$$ and hence, we get  $\left( \supp({w/u_{\mathbf{I}}}) \right)_{{\bf X}_a}=\{ X_{1,i}:p_i \in I_{a-1}\setminus I_{a}\}$. 
 
Let $F \subset V$ be a face of $\Delta_P^{\vee}$. Since facets of $\Delta_P^{\vee}$ are given by $\supp(w/u_{\mathbf{I}})$ for some ${\mathbf{I}} \in K_r(\mathcal{P})$, there exists 
${\mathbf{I}} \in K_r(\mathcal{P})$ such that $F\subset \supp({w/u_{\mathbf{I}}})$. In particular,  for $a\in [r]$, we have $F_{\mathbf{X}_a} \subset \left( \supp({w/u_{\mathbf{I}}}) \right)_{\mathbf{X}_a}$.

First, assume that $p_i \leq_{[a,b-1]} p_j$, where $1\leq a<b\leq r$. Then our claim is that the set $F=\{X_{a,i},X_{b,j}\}$ is a minimal non-face of $\Delta_P^{\vee}$. Note that since $|F|=2$, it is enough to prove that $F$ is a non-face of $\Delta_P^{\vee}$.

Let $\mathbf{I}\in K_r(\mathcal{P}).$ If $p_j \notin I_{b-1} \setminus I_b,$ then $X_{b,j}$ divides $u_{\bf I}$, and hence  $X_{b,j}\notin \supp({w/u_{\bf I}})$. This forces that $F_{{\bf X}_b} \not\subset \left( \supp({w/u_{\mathbf{I}}}) \right)_{{\bf X}_b}.$ Otherwise, we claim that $F_{{\bf X}_a} \not\subset \left( \supp({w/u_{\mathbf{I}}}) \right)_{{\bf X}_a},$ and hence by the claim $F$ is a non-face. In order to prove the claim, it is enough to prove that $p_i\in I_a $.

\emph{Proof of claim:}
Since $p_i \leq_{[a,b-1]} p_j,$ there exist a non-decreasing sequence $(t_1,t_2,\ldots,t_k)$ such that $p_{i} \leq_{a} p_{t_1} \leq_{a+1}\cdots\leq_{b-2}p_{t_k}\leq_{b-1}p_{j}$. The fact $p_{t_k}\leq_{b-1}p_{j}$ and 
$I_{b-1}$ is a poset ideal in $P_{b-1}$ implies that $p_{t_k} \in I_{b-1}$. Using $I_{b-1}\subset I_{b-2}$, we get $p_{t_k} \in I_{b-2}$. Now, again repeat the process, we get $p_i \in I_a.$ This proves the claim.

Conversely, let $F$ be a minimal nonface of 
$\Delta_P^{\vee}$. 
This gives us the following:
\begin{equation}\label{non}
\text{for~ any}~ \mathbf{I} \in K_r(P)~ \exists~ a \in [r]~ \text{such ~that}~ F_{\mathbf{X}_a} \not\subset \left( \supp({w/u_{\mathbf{I}}}) \right)_{\mathbf{X}_a}.
\end{equation}

\emph{Case 1}: Suppose $F_{{\bf X}_{a}} \not\subset (\supp({w/u_{\boldsymbol{\gamma}^F}}))_{{\bf X}_a}$  for some $ 2\leq a \leq r$. Since we know that $(\supp({w/u_{\boldsymbol{\gamma}^F}}))_{{\bf X}_a}=\{ X_{1,i}:p_i \in \gamma_{a-1}^F\setminus \gamma_{a}^F\}$, there exists some $X_{1,i} \in F_{{\bf X}_{a}}$ such that either $p_i \notin \gamma^F_{a-1}$ or $p_i \in \gamma^F_{a}.$ Also, note that $X_{1,i} \in F_{{\bf X}_{a}}$, by definition of $\gamma^F_{a-1},$ we see that $p_i\in\gamma^F_{a-1}$, and hence $p_i\in\gamma^F_{a}$. By Lemma \ref{remark}, there exists a $p_j$ such that $p_i\leq_{[a,b-1]}p_j$ with $X_{1,j} \in F_{{\bf X}_b}$ for some $a+1 \leq b \leq r.$
Since $\{ X_{{a},i}, X_{b,j} \}$ is a minimal nonface of $\Delta_P^{\vee}$. By assumption $F$ is a minimal nonface, and hence we get $F=\{ X_{a,i}, X_{b,j} \}$ with $p_i \leq_{[a,b-1]} p_j.$
\\
\emph{Case 2}: $F_{{\bf X}_{a}} \subset (\supp({w/u_{\boldsymbol{\gamma}^F}}))_{{\bf X}_a}$ for $2 \leq a \leq r$.
Since $\boldsymbol{\gamma}^F\in K_r(\mathcal{P})$, by Equation \eqref{non}, we get $F_{{\bf X}_1}\not\subset (\supp({w/u_{\boldsymbol{\gamma}^F}}))_{{\bf X}_1}$. Thus, there exists $p_i \in \gamma^F_1$ such that $X_{1,i} \in F_{{\bf X}_1}.$

Since $p_i\notin\gamma^F_r=\phi$, we can choose $2\leq b \leq r$ such that $b=\min\{j:p_i\notin \gamma^F_j\}$. Then, we see that $p_i \in \gamma^F_{b-1} \setminus \gamma^F_{b}$. 
Let $p_j\in \gamma^F_{b-1} \setminus \gamma^F_{b}$ such that $p_i \leq_{b-1} p_j.$ Since $p_j\in \gamma^F_{b-1}\setminus\gamma^F_{b}$, by Lemma \ref{remark}, there exists some $b\leq c\leq r$ such that $p_j\leq_{[b,c-1]}p_k$ with $X_{c,k}\in F$. Note that $p_i\leq_{[1,c-1]}p_k$ and $\{X_{1,i}, X_{c,k}\}\subset F$. By assumption $F$ is a minimal nonface of $\Delta^{\vee}_P$ and we know $\{X_{1,i}, X_{c,k}\}\subset F$ is a nonface, and hence $F=\{X_{1,i}, X_{c,k}\}$ with $p_i\leq_{[1,c-1]}p_k$.
 \end{proof}

Let $\mathcal{F}=\{\leq_{[a,b]}:1\leq a\leq b\leq r-1 \}$ be a family of reflexive and antisymmetric relations on a given set $P=\{p_1,p_2,\dots,p_n \}$. Now corresponding to $\mathcal{F}$, we associate an $r$-partite graph on a vertex set $V=\bigcup\limits_{a=1}^rV_a$ such that $ X_{a, i}$ is adjacent to $ X_{b,j}$  if and only if $p_i \leq_{[a,b-1]} p_j$.

\begin{example}{\rm
	Let $P=\{p_1,p_2,p_3\}$ and $\mathcal{F}=\{\leq_1,\leq_2,\leq_{[1,2]}\}$, where $\leq_1, \leq_2,\leq_{[1,2]}$ are given as in Examples \ref{posetideal} and \ref{posetExample}. We associate a following graph on vertices set $V=\{X_{a,i}: a,i\in [3]\}$:
	
	\[
	\begin{tikzpicture}[baseline=(current bounding box.north), level/.style={sibling distance=50mm}]
	\tikzstyle{edge} = [draw,thick,-]		
	\draw[-] (0,0) -- (2,0);
	\draw[-] (0,1.5) -- (2,0);
	\draw[-] (0,0) to[out=30, in=150] (4,0);
	\draw[-] (0,1.5) -- (2,1.5);
	\draw[-] (0,1.5) -- (4,1.5);
	\draw[-] (0,3) -- (4,1.5);
	\draw[-] (0,3) to[out=30, in=150] (4,3);
	\draw[-] (0,3) -- (2,3);
	\draw[-] (2,3) -- (4,3);
	\draw[-] (2,3) -- (4,1.5);
	\draw[-] (2,1.5) -- (4,1.5);
	\draw[-] (2,0) -- (4,0);
	\draw[-] (0,1.5) -- (4,0);
	\draw[-](0,1.5) to[out=30, in=150] (4,1.5);
			
	\node [left] at (0,0) { $X_{1,3}$};
	\node [left] at (0,1.5) { $X_{1,2}$};
	\node [left] at (0,3) { $X_{1,1}$};
	
	\node [above] at (2,0) { $X_{2,3}$};
	\node [above] at (2,1.5) { $X_{2,2}$};
	\node [above] at (2,3) { $X_{2,1}$};
	
	\node [right] at (4,0) { $X_{3,3}$};
	\node [right] at (4,1.5) { $X_{3,2}$};
	\node [right] at (4,3) { $X_{3,1}$};

	\draw [fill] (0,0) circle [radius=0.08];
	\draw [fill] (0,1.5) circle [radius=0.08];
	\draw [fill] (0,3) circle [radius=0.08];
	
	\draw [fill] (2,0) circle [radius=0.08];
	\draw [fill] (2,1.5) circle [radius=0.08];
	\draw [fill] (2,3) circle [radius=0.08];
	
	\draw [fill] (4,0) circle [radius=0.08];
	\draw [fill] (4,1.5) circle [radius=0.08];
	\draw [fill] (4,3) circle [radius=0.08];
	\node[below] at (2,-.3){$G$};
	
	\end{tikzpicture}
	\]
	Note that by the following corollary $G$ is Cohen-Macaulay.
}\end{example}
 \begin{corollary}\label{par}
 Assume that the above family $\mathcal{F}$ have the following properties:
 \begin{enumerate}[\rm i)]
 	\item For $a\in [r-1]$, $P_{[a,a]}=P_a$ is a poset.
 	\item For $1\leq a \leq b\leq r-1$, if $p_i \leq_{[a,b]} p_j$, then $i\leq j$.
 	\item For $1\leq a \leq b\leq r-1$, if $p_i \leq_{[a,b]} p_j$, then there exists a non-decreasing sequence $(t_1,t_2,\ldots,t_k)$ such that  $p_{i} \leq_{a} p_{t_1} \leq_{a+1}\cdots\leq_{b-1}p_{t_k}\leq_{b}p_{j}.$
 \end{enumerate}	
 	
 Then an $r$-partite graph associated to $\mathcal{F}$ is Cohen-Macaulay. 
 \end{corollary}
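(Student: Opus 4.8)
The strategy is to exhibit $I(G)$, the edge ideal of the graph $G$ associated to $\mathcal F$, as the Alexander dual $H_r(\mathcal P)^{\vee}$ of an ideal of the form studied in Section \ref{Resolution}, and then to read off the Cohen--Macaulay property from Proposition \ref{linear} via the Eagon--Reiner correspondence \cite{ER}. To build the required $\mathcal P$, I would take the diagonal relations of $\mathcal F$: set $\leq_a := \leq_{[a,a]}$ for $a\in[r-1]$ and $\mathcal P=\{\leq_a:a\in[r-1]\}$. Hypothesis (i) guarantees that each $\leq_a$ is a partial order, so $\mathcal P$ is a genuine family of posets, and the diagonal case $a=b$ of hypothesis (ii) yields $p_i\leq_a p_j\Rightarrow i\leq j$. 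These are exactly the standing assumptions on $\mathcal P$ imposed at the start of Section \ref{CMGraphs}, so Theorem \ref{Alexander} becomes available for this $\mathcal P$.

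The central step is to match the two graphs. By construction $X_{a,i}$ is adjacent to $X_{b,j}$ in $G$ precisely when $p_i\leq_{[a,b-1]}p_j$ holds in $\mathcal F$, whereas Theorem \ref{Alexander} describes $H_r(\mathcal P)^{\vee}$ as being minimally generated by the quadratic squarefree monomials $X_{a,i}X_{b,j}$ for which $p_i\leq_{[a,b-1]}p_j$ holds for the composite relation built from $\mathcal P$. Thus it suffices to show that, for every $1\le a\le b\le r-1$, the relation $\leq_{[a,b]}$ recorded in $\mathcal F$ coincides with the composite relation attached to $\mathcal P$. Hypothesis (iii) is precisely the statement that each relation of $\mathcal F$ is contained in the corresponding composite: a relation $p_i\leq_{[a,b]}p_j$ is witnessed by a non-decreasing chain $p_i\leq_a p_{t_1}\leq_{a+1}\cdots\leq_b p_j$, which is exactly the defining condition for the composite relation of $\mathcal P$. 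The reverse containment expresses that composing the diagonal relations $\leq_{[a,a]}$ along a non-decreasing index sequence never produces a pair outside $\mathcal F$, which is the compatibility that the notation $\leq_{[a,b]}$ is meant to encode; once it is in place one obtains $I(G)=H_r(\mathcal P)^{\vee}$.

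With the identification $I(G)=H_r(\mathcal P)^{\vee}$ secured, the conclusion follows formally. Proposition \ref{linear} gives that $H_r(\mathcal P)$ has a linear resolution, and since it is a squarefree monomial ideal we have $\big(H_r(\mathcal P)^{\vee}\big)^{\vee}=H_r(\mathcal P)$. By the Eagon--Reiner theorem \cite{ER}, the quotient $S/H_r(\mathcal P)^{\vee}=S/I(G)$ is Cohen--Macaulay if and only if its Alexander dual $H_r(\mathcal P)$ has a linear resolution; the latter holds, so $G$ is Cohen--Macaulay. The step I expect to require the most care is the graph-matching in the middle paragraph: Theorem \ref{Alexander} is phrased in terms of the composite relations of a poset family, so the whole argument hinges on checking that hypotheses (i)--(iii) permit one to replace the abstract family $\mathcal F$ by the composites of its own diagonal. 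In particular one must stay attentive to the fact, emphasised before the definition of $P_{[a,b]}$, that $\leq_{[a,b]}$ need not be transitive, so this identification cannot be shortcut by appealing to transitivity of a single order.
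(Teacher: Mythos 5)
Your proposal is correct and follows essentially the same route as the paper: the paper's own (two-line) proof simply notes that the edge ideal of the graph associated to $\mathcal{F}$ equals $H_r(\mathcal{P})^{\vee}$ and invokes Proposition \ref{linear} together with the Eagon--Reiner duality, which is exactly your argument. Your version is in fact more careful than the paper's, since you make explicit the construction of $\mathcal{P}$ from the diagonal relations of $\mathcal{F}$ and flag the reverse containment needed to identify each $\leq_{[a,b]}$ with the composite of the diagonals --- a point the paper's proof (and, strictly speaking, its hypothesis (iii), which is stated only as an implication rather than the ``if and only if'' appearing in Theorem \ref{mainTheorem1}) glosses over.
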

 \begin{proof}
 We notice that the edge ideal of an $r$-partite graph associated to a family $\mathcal{F}$ is equals to $H_r(\mathcal{P})^{\vee}.$ 
 Since, by Proposition \ref{linear}, the ideal $H_r(\mathcal{P})$ has a linear resolution, the result follows.
 \end{proof}
 
 \begin{theorem}\label{mainTheorem1}
 Let $G$ be an $r$-partite graph with partitions $V_a\ =\{  X_{a,1},\dots, X_{a,n} \},$ for all $a \in [r]$  satisfying the following conditions:
 \begin{enumerate}[\rm i)]
 \item $\{X_{a,i}, X_{b,i}\}$ is an edge for all $i \in [n]$ and $1 \leq a<b \leq r;$
 \item if $\{X_{a,i}, X_{b,j} \}$ is an edge with $1\leq a<b \leq r,$ then $i \leq j;$
\item $\{X_{a,i}, X_{b,j} \}$ is an edge if and only if there exists a non-decreasing sequence $(t_1,t_2,\ldots,t_k)$ such that $\{X_{a,i}, X_{a+1,t_1} \},\dots, \{X_{b-1,t_k}, X_{b,j} \}$ are edges of $G$ for $1\leq a\leq b\leq r$ and $i,j\in [n]$.
 \item if  $\{X_{a,i}, X_{a+1,j} \}$ and $\{X_{a,j}, X_{a+1,k} \}$ are edges for $a\in [r-1]$,   then $\{X_{a,i}, X_{a+1,k} \}$ is an edge.
 \end{enumerate}
 Then $G$ is Cohen-Macaulay.
 \end{theorem}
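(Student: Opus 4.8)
The plan is to realize $G$ as the $r$-partite graph associated to a suitable family $\mathcal{F}$ of relations coming from a family of posets, and then to invoke Corollary \ref{par}. First I would define, for each $a\in[r-1]$, a relation $\leq_a$ on $P=\{p_1,\dots,p_n\}$ by declaring $p_i\leq_a p_j$ if and only if $\{X_{a,i},X_{a+1,j}\}$ is an edge of $G$. The first task is to check that each $\leq_a$ is a partial order: reflexivity is immediate from condition i) applied with $b=a+1$; antisymmetry follows from condition ii), since $p_i\leq_a p_j$ and $p_j\leq_a p_i$ force $i\leq j$ and $j\leq i$, hence $i=j$; and transitivity is precisely condition iv). In particular each $P_a=P_{[a,a]}$ is a poset, which is hypothesis i) of Corollary \ref{par}, and condition ii) with $b=a+1$ also records that $p_i\leq_a p_j$ implies $i\leq j$, so $\mathcal{P}=\{\leq_a:a\in[r-1]\}$ meets the running assumption of Section \ref{CMGraphs}.

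Next I would form the induced relations $\leq_{[a,b]}$ by composition exactly as in the definition preceding Example \ref{posetExample}, and set $\mathcal{F}=\{\leq_{[a,b]}:1\leq a\leq b\leq r-1\}$; as shown there, each $\leq_{[a,b]}$ is then automatically reflexive and antisymmetric, so $\mathcal{F}$ is an admissible family for Corollary \ref{par}. Hypothesis iii) of Corollary \ref{par} holds by the very definition of $\leq_{[a,b]}$. For hypothesis ii), if $p_i\leq_{[a,b]} p_j$ then there is a non-decreasing sequence $p_i\leq_a p_{t_1}\leq_{a+1}\cdots\leq_b p_j$; each consecutive step is an edge between adjacent partitions, so condition ii) of the theorem gives $i\leq t_1\leq\cdots\leq j$, whence $i\leq j$.

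The key step is to verify that $G$ is exactly the $r$-partite graph associated to $\mathcal{F}$, i.e. that for $1\leq a<b\leq r$ the pair $\{X_{a,i},X_{b,j}\}$ is an edge of $G$ if and only if $p_i\leq_{[a,b-1]} p_j$. This is where condition iii) of the theorem does the work: an edge $\{X_{a,i},X_{b,j}\}$ exists if and only if there is a non-decreasing sequence with $\{X_{a,i},X_{a+1,t_1}\},\dots,\{X_{b-1,t_k},X_{b,j}\}$ all edges of $G$, and by the definition of $\leq_a$ this says precisely $p_i\leq_a p_{t_1}\leq_{a+1}\cdots\leq_{b-1} p_j$, that is, $p_i\leq_{[a,b-1]} p_j$ (the case $b=a+1$ reducing to the definition of $\leq_a$). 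Consequently $I(G)=H_r(\mathcal{P})^{\vee}$ by Theorem \ref{Alexander}.

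With all three hypotheses of Corollary \ref{par} verified and $G$ identified with the graph associated to $\mathcal{F}$, the conclusion that $G$ is Cohen-Macaulay follows directly from Corollary \ref{par} (equivalently, from Theorem \ref{Alexander} together with the linear resolution of $H_r(\mathcal{P})$ established in Proposition \ref{linear}). I expect the main obstacle to be the index bookkeeping in the key step: one must keep the shift between the relation $\leq_{[a,b-1]}$ and edges joining partitions $a$ and $b$ consistent, and match the non-decreasing sequences furnished by condition iii) of the theorem with those in the definition of $\leq_{[a,b]}$. None of the individual verifications is deep, but they must be threaded together carefully, and in particular one should note that $\leq_{[a,b]}$ need not be transitive, so only the composition definition — not any poset property of $\leq_{[a,b]}$ — may be used at that point.
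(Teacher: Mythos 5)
Your proposal is correct and follows essentially the same route as the paper: the paper's proof simply defines $p_i\leq_{[a,b-1]}p_j$ to mean that $\{X_{a,i},X_{b,j}\}$ is an edge of $G$ and observes that conditions i)--iv) translate into the hypotheses of Corollary \ref{par}, which is exactly the reduction you carry out (your only cosmetic difference is building $\leq_{[a,b]}$ by composing the adjacent-level relations $\leq_a$ and then using the ``if and only if'' in condition iii) to identify the composed relation with the edge relation, rather than defining all the relations from edges at once). In fact your write-up supplies precisely the verifications that the paper compresses into a single ``Observe that'' sentence.
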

\begin{proof}
Let $P=\{p_1,p_2,\ldots,p_n \}.$ Then for each $1\leq a<b\leq r,$ we define a relation $\leq_{[a,b-1]}$ on 
$P$ by $p_i \leq_{[a,b-1]} p_j$ if and only if  
$\{X_{a,i},X_{b,j}\}$ is an edge of $G$. Observe that for all $1\leq a< b\leq r$, $P_{[a,b-1]}$ satisfies the hypothesis of Corollary \ref{par}, and hence $G$ is Cohen-Macaulay.
\end{proof}

\begin{definition}{\rm
Let $G$ be a bipartite graph on the vertex set $V$ with partition of $V=V_1\cup V_2$, where $V_1=\{X_{1,1},\dots,X_{1,m}\}$ and $V_2=\{X_{2,1},\ldots,X_{2,n}\}$. Then we say that $G$ satisfies \emph{Herzog-Hibi conditions} if it satisfy the following:
\begin{enumerate}[\rm i)]
\item $m=n$.
\item For all $i$, we have $\{X_{1,i},X_{2,i}\}\in E$.
\item If $\{X_{1,i},X_{2,j}\}\in E$, then $i\leq j$.
\item If $\{X_{1,i},X_{2,j}\}\in E$ and $\{X_{1,j},X_{2,k}\}\in E$, then $\{X_{1,i},X_{2,k}\}\in E$.
\end{enumerate}
Further, if for $i\leq j$, we have $\{X_{1,i},X_{2,j}\}\in E$, then we say $G$ is a \emph{Herzog-Hibi complete} graph.
}\end{definition}
\begin{remark}\label{biparRem}{\rm Let $V_1=\{X_{1,1},\dots,X_{1,m}\}$ and $V_2=\{X_{2,1},\ldots,X_{2,n}\}$, and $G$ be a bipartite graph on a vertex set $V=V_1\cup V_2$. If $G$ satisfy Herzog-Hibi conditions, then, by Theorem \ref{mainTheorem1}, we know that $G$ is Cohen-Macaulay. In fact, Herzog-Hibi (\cite{HH}, 2005) prove that $G$ is Cohen-Macaulay if and only if $G$ satisfy Herzog-Hibi conditions. In this case, further we have the following:
	\begin{enumerate}[i)]
		\item Let $G'$ be induced subgraph on vertex set $V\setminus\{X_{1,1},X_{2,1}\}$. Then note that $G'$ satisfy Herzog-Hibi conditions, and hence $G'$ is Cohen-Macaulay. 
		\item Let $S={\sf k}[X_{1,1},\dots, X_{1,n},X_{2,1},\dots, X_{2,n}]$. Since $X_{2,1}$ is adjacent to only $X_{1,1}$,  we get $\langle I(G), X_{1,1}\rangle=\langle I(G'), X_{1,1}\rangle$ and $\dim\left(\dfrac{S}{\langle I(G), X_{1,1}\rangle}\right)=\dim\left(\dfrac{S}{I(G)}\right)=n$, and hence by (i), we know that $\depth\left(\dfrac{S}{\langle I(G), X_{1,1}\rangle}\right)=n$.
		\item Consider the sequence $0\longrightarrow\dfrac{S(-1)}{ I(G):X_{1,1}}\overset{X_{1,1}}\longrightarrow\dfrac{S}{ I(G)}\longrightarrow \dfrac{S}{\langle I(G),X_{1,1}\rangle}\longrightarrow 0.$ This forces that $\dim\left(\dfrac{S(-1)}{I(G):X_{1,1}}\right)\leq n$ and $\depth\left(\dfrac{S(-1)}{ I(G):X_{1,1}}\right)\geq n$, and hence $\dfrac{S(-1)}{ I(G):X_{1,1}}$ is Cohen-Macaulay of dimension $n$.
	\end{enumerate}
}\end{remark}
\begin{theorem}\label{mainTheorem2}
Let $G$ be an $r$-partite graph with partition $V_1,V_2,\dots, V_r$  which satisfy the following:
\begin{enumerate}[\rm i)]
	\item $|V_a|= n$ for all $1\leq a\leq  r$
	\item for all $1\leq a,b\leq r-1$, the induced graph on the vertices $V_a\cup V_b$ is a complete bipartite graph,
	\item for all $1\leq a\leq r-1$, the induced graph on vertices $V_a\cup V_r$ satisfies the Herzog-Hibi conditions.
\end{enumerate}
Then $G$ is Cohen-Macaulay.
\end{theorem}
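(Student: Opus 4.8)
The plan is to prove that $S/I(G)$ is Cohen--Macaulay by establishing $\depth(S/I(G))=\dim(S/I(G))=n$. Let me first note why one cannot simply invoke Theorem~\ref{mainTheorem1}: the complete bipartite blocks on $V_a\cup V_b$ for $a,b\le r-1$ contain edges $\{X_{a,i},X_{b,j}\}$ with $i>j$, which violate condition (ii) there, so a genuinely different argument is required. For the dimension, recall that $\dim(S/I(G))$ equals the independence number of $G$. Any independent set meets at most one of $V_1,\dots,V_{r-1}$, since these parts are pairwise complete bipartite; if it avoids all of them it lies in $V_r$ and has size at most $n$, while if it meets exactly one $V_a$ it lies in $V_a\cup V_r$ and is independent in the Herzog--Hibi graph $G[V_a\cup V_r]$, whose independence number is $n$ by Remark~\ref{biparRem}. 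As $V_r$ is itself independent of size $n$, we conclude $\dim(S/I(G))=n$.

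I would then induct on $n$. In the base case $n=1$ every induced block forces all $r$ vertices to be pairwise adjacent, so $G=K_r$, whose independence complex is $r$ disjoint points and is therefore Cohen--Macaulay of dimension $1$. For the inductive step I would single out $v=X_{r,1}$; by the Herzog--Hibi conditions on each $V_a\cup V_r$ its neighborhood is exactly $N(X_{r,1})=\{X_{1,1},\dots,X_{r-1,1}\}$. I would then use the short exact sequence
$$0\longrightarrow \bigl(S/(I(G):X_{r,1})\bigr)(-1)\xrightarrow{\ X_{r,1}\ } S/I(G)\longrightarrow S/\langle I(G),X_{r,1}\rangle\longrightarrow 0.$$
The colon term is the clean one, paralleling Remark~\ref{biparRem}(iii): since $I(G):X_{r,1}=I(G)+\langle X_{1,1},\dots,X_{r-1,1}\rangle$, killing these $r-1$ variables isolates $X_{r,1}$ and leaves the induced subgraph on the remaining vertices. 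That subgraph is the disjoint union of the isolated point $X_{r,1}$ with the graph $G'$ on $(V_a\setminus\{X_{a,1}\})_{a<r}\cup(V_r\setminus\{X_{r,1}\})$, and $G'$ again satisfies the hypotheses of the theorem with parameter $n-1$ (the blocks $V_a\cup V_r$ remain Herzog--Hibi after deleting the matched pair $X_{a,1},X_{r,1}$ by Remark~\ref{biparRem}(i)). By the inductive hypothesis $G'$ is Cohen--Macaulay of dimension $n-1$, so adjoining the free variable $X_{r,1}$ makes $S/(I(G):X_{r,1})$ Cohen--Macaulay of dimension $n$.

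The remaining, and main, difficulty is the quotient term $S/\langle I(G),X_{r,1}\rangle$, the edge ring of $G-X_{r,1}$. Here the parts have sizes $|V_a|=n$ for $a<r$ but $|V_r|=n-1$, so $G-X_{r,1}$ falls outside the class of the theorem (indeed an unbalanced bipartite block can never be Cohen--Macaulay on its own), and one cannot merely reapply the inductive hypothesis; I expect this to be the crux. My plan to handle it is to strengthen the inductive statement to a class in which $V_r$ is allowed to be attached to $V_1,\dots,V_{r-1}$ through induced sub-configurations of Herzog--Hibi graphs (those obtained by deleting a down-set from the $V_r$-side), and to show that this enlarged class is closed under both the colon and quotient operations above; equivalently, one shows $\mathrm{Ind}(G)$ is vertex decomposable with $X_{r,1}$ as a shedding vertex. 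The delicate point, and the reason a careless induction fails, is that peeling off all of $V_r$ would leave the complete multipartite graph $K_{n,\dots,n}$, which is \emph{not} Cohen--Macaulay for $n>1$; hence the shedding vertices must always be chosen so as to keep a vertex of $V_r$ connected to the rest, since it is exactly this connection that restores the Cohen--Macaulay property. Once $\depth\bigl(S/\langle I(G),X_{r,1}\rangle\bigr)\ge n$ is secured, the Depth Lemma applied to the displayed sequence gives $\depth(S/I(G))\ge n$, and together with $\dim(S/I(G))=n$ this yields that $G$ is Cohen--Macaulay.
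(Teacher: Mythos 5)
Your setup is sound up through the colon term: the computation $N(X_{r,1})=\{X_{1,1},\dots,X_{r-1,1}\}$, the identity $I(G):X_{r,1}=\langle I(G),X_{1,1},\dots,X_{r-1,1}\rangle$, and the conclusion that this module is Cohen--Macaulay of dimension $n$ (induced graph $G'$ with parameter $n-1$ plus the free variable $X_{r,1}$) are all correct, and in fact this module is exactly the quotient term at the top of the chain of short exact sequences in the paper's proof. The genuine gap is the other term of your sequence, and it is not just a missing argument but an obstruction: $S/\langle I(G),X_{r,1}\rangle$ can fail to have depth $\geq n$, so no strengthening of the induction can certify it. Concretely, take $r=2$, $n=2$ and $G$ the Herzog--Hibi complete graph, i.e.\ the path $X_{2,1}-X_{1,1}-X_{2,2}-X_{1,2}$. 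Then $S/\langle I(G),X_{2,1}\rangle\simeq {\sf k}[X_{1,1},X_{1,2},X_{2,2}]/\langle X_{1,1}X_{2,2},X_{1,2}X_{2,2}\rangle$, which has dimension $2$ but depth $1$ (its minimal primes $\langle X_{2,2}\rangle$ and $\langle X_{1,1},X_{1,2}\rangle$ have different dimensions). Equivalently, $X_{r,1}$ is \emph{not} a shedding vertex: $\{X_{2,2}\}$ is a facet of the independence complex of $G-X_{2,1}$ but not of that of $G$, so your claimed vertex decomposability fails, and your enlarged class (``delete a down-set from the $V_r$-side'') contains non-Cohen--Macaulay graphs. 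With $\depth$ of the quotient term only $1$, the Depth Lemma applied to your sequence yields $\depth(S/I(G))\geq 1$, which is useless; the information needed to recover $\depth = n$ is genuinely lost in this decomposition.

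The paper's proof splits along the \emph{neighbors} of $X_{r,1}$ rather than along $X_{r,1}$ itself, which is what makes every term controllable. It runs a descending chain of short exact sequences, for $a=r-1,r-2,\dots,1$,
\begin{equation*}
0\longrightarrow \dfrac{S(-1)}{\langle I(G),X_{1,1},\ldots,X_{a-1,1} \rangle:X_{a,1}}\overset{X_{a,1}}\longrightarrow\dfrac{S}{\langle I(G),X_{1,1},\ldots,X_{a-1,1} \rangle}\longrightarrow \dfrac{S}{\langle I(G),X_{1,1},\ldots,X_{a,1} \rangle}\longrightarrow 0,
\end{equation*}
whose topmost quotient $S/\langle I(G),X_{1,1},\dots,X_{r-1,1}\rangle$ is your colon term (handled by induction on $n$), and whose colon terms are Cohen--Macaulay of dimension $n$ because $X_{a,1}$ with $a\leq r-1$ is adjacent to \emph{all} of $V_b$ for $b\neq a$, $b\leq r-1$, so that $\langle I(G),X_{1,1},\dots,X_{a-1,1}\rangle:X_{a,1}$ collapses to $\langle \bigcup_{b\neq a,\,b\leq r-1}V_b,\ I(G_a):X_{a,1}\rangle$ and reduces to the bipartite Herzog--Hibi statement (Remarks \ref{biparRem}(iii) and \ref{mulparRem}(v)). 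The Depth Lemma then propagates Cohen--Macaulayness down the chain to $S/I(G)$. In short: the vertices to peel off are $X_{1,1},\dots,X_{r-1,1}$, in that telescoping fashion, never $X_{r,1}$; deleting $X_{r,1}$ destroys purity of the independence complex and cannot be repaired.
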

\begin{remark}\label{mulparRem}{\rm Let $G$ be a graph as in Theorem \ref{mainTheorem2} and $S={\sf k}[\mathbf{X}_1,\mathbf{X}_2,\dots,\mathbf{X}_n]$. Suppose $V_a=\{X_{a,1},\dots,X_{a,n}\}$ for all $a\in [r]$.
\begin{enumerate}[i)]
	\item If $n=1$, then $G$ is a complete graph on $r$ vertices, and hence $G$ is Cohen-Macaulay.
	\item For $1\leq a\leq r$, let $V'_a=V\setminus\{X_{a,1}\}$, and let $G'$ be the induced subgraph of $G$ on the vertex set $V'=V_1'\cup\cdots\cup V_{r}'.$
	Then $G'$ also satisfies the hypothesis of Theorem \ref{mainTheorem2} with  $|V_a'|=n-1$ for all $a$.
	\item Let $G_a$ be the induced subgraph on vertex set $V_a\cup V_{r}$ for all $a\in [r-1]$. Since, for all $a$, $G_a$ is bipartite graph which satisfies Herzog-Hibi conditions, by Remark \ref{biparRem}(i), $G_a$ is Cohen-Macaulay, and hence by \ref{biparRem}(iii), we get $\dfrac{S}{I(G_a): X_{a,1}} $ is Cohen-Macaulay.
	\item For $a\in[r]$ and $i\in[n]$, define st$(X_{a,i})=\{X_{b,j}\in V: X_{b,j} \text{ adjacent to } X_{a,i}\}$. Since the induced graph on vertices $V_a\cup V_{b}$ is a complete bipartite graph for all $a,b\leq r-1$, we get $V_1\cup\cdots\cup V_{a-1}\cup V_{a+1}\cup\cdots\cup V_{r-1}\subset \text{st}(X_{a,1})$ for all $a\in[r-1]$.
	\item By (iv), we get $$I(G):X_{a,1}=\langle V_1\cup\cdots\cup V_{a-1}\cup V_{a+1}\cup\cdots\cup V_{r-1}, I(G_a):X_{a,1}\rangle.$$ Since, by Remark \ref{biparRem}(iii), $\dfrac{S}{I(G_a):X_{a,1}}$ is Cohen-Macaulay,  
	
	By Remark \ref{biparRem}(iii), we know that $\dfrac{S}{I(G):X_{a,1}}$ is Cohen-Macaulay, and hence so is $\dfrac{S}{\langle I(G), X_{1,1},\dots,X_{a-1,1}\rangle:X_{a,1} }$.
	 Note that $\dfrac{S}{\langle I(G), X_{1,1},\dots,X_{a-1,1}\rangle:X_{a,1} }\simeq \dfrac{{\sf k}[V_a,V_r]}{I(G_a): X_{a,1}}$, and hence Remark \ref{biparRem}(iii) forces that $\dim\left(\dfrac{S}{\langle I(G), X_{1,1},\dots,X_{a-1,1}\rangle:X_{a,1} }\right)=n$.
\end{enumerate}
}\end{remark}
\begin{proof}[Proof of Theorem \ref{mainTheorem2}] 
In order to prove the result, we use the induction on $n$. If $n=1$, by Remark \ref{mulparRem}(i), we know $G$ is Cohen-Macaulay.
If $n>1$, then take $V_a=\{X_{a,1},X_{a,2},\dots,X_{a,n}\}$. Take the following short exact sequence 
{\footnotesize $$	0\longrightarrow \dfrac{S(-1)}{\langle I(G),X_{1,1},\ldots,X_{r-2,1} \rangle:X_{r-1,1}}\overset{X_{r-1,1}}\longrightarrow\dfrac{S}{\langle I(G),X_{1,1},\ldots,X_{r-2,1} \rangle}\longrightarrow \dfrac{S}{\langle I(G),X_{1,1},\ldots,X_{r-1,1} \rangle}\longrightarrow 0.$$}

For $1\leq a\leq r$, let $V'_a=V\setminus\{X_{a,1}\}$, and let $G'$ be the induced subgraph of $G$ on the vertex set $V=V_1'\cup\cdots\cup V_{r}'.$
Note that induce subgraph $G'$ also satisfies the hypothesis of the theorem with $|V_a'|=n-1<|V_a|$ for all $a$. Hence, by induction hypothesis, $G'$ is Cohen-Macaulay, and hence so is $\dfrac{S}{\langle I(G'), X_{1,1},\ldots,X_{r-1,1}\rangle }$, and $\dim\left(\dfrac{S}{\langle I(G'), X_{1,1},\ldots,X_{r-1,1}\rangle}\right)=n$. By Remark \ref{mulparRem}(v), we know $\dfrac{S}{\langle I(G),X_{1,1},\ldots,X_{r-2,1} \rangle:X_{r-1,1}}$ is Cohen-Macaulay of dimension $n$, and hence so $\dfrac{S}{\langle I(G),X_{1,1},\ldots,X_{r-2,1} \rangle}$. 

Similarly, using the following short exact sequence
{\footnotesize $$
0\rightarrow \dfrac{S(-1)}{\langle I(G),X_{1,1},\ldots,X_{r-3,1} \rangle:X_{r-2,1}}\overset{X_{r-2,1}}\rightarrow\dfrac{S}{\langle I(G),X_{1,1},\ldots,X_{r-3,1} \rangle}\rightarrow \dfrac{S}{\langle I(G),X_{1,1},\ldots,X_{r-2,1} \rangle}\rightarrow 0,
$$}
	
we get $\dfrac{S}{\langle I(G),X_{1,1},\ldots,X_{r-3,1} \rangle}$ is Cohen-Macaulay. By repeating the above process, we get $\dfrac{S}{I(G)}$ is Cohen-Macaulay.	
\end{proof}
Note that if $G$ is Cohen-Macaulay, then $G$ need not be one of the graph described in Theorems \ref{mainTheorem1} and \ref{mainTheorem2}. For example, if $G$ is cycle on $5$ vertices, then $G$ is Cohen-Macaulay. But $G$ does not satisfy the hypothesis of Theorems \ref{mainTheorem1} and \ref{mainTheorem2}.

\begin{remark}{\rm
Let $G$ be as in the above theorem such that for $1\leq a\leq r-1$, the induced graph on vertices $V_a\cup V_r$ is a Herzog-Hibi complete graph. Since ${r-1\choose 2}$ partitions are complete bipartite and $r-1$ partitions are Herzog-Hibi complete, we have the number of edges is $$n^2{r-1\choose 2} +(r-1){n+1\choose 2}={(r-1)n+1\choose 2}.$$ 
}\end{remark}
Since the height of edge ideal of a given graph in the above remark is $(r-1)n$, by \cite[Theorem 4.3.7]{Vi}, we have the following:
 
\begin{corollary}
Let $G$ be given as in the above remark and $I$ be the edge ideal of $G$. Then $S/I$ has a linear resolution.
\end{corollary}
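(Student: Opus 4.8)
The plan is to reduce the statement to \cite[Theorem 4.3.7]{Vi} by verifying its three hypotheses: that $S/I$ is Cohen-Macaulay, that $I$ has height $g:=(r-1)n$, and that $I$ is generated in degree $2$ by exactly $\binom{g+1}{2}$ squarefree monomials, which is the maximum possible number of quadratic generators for a height-$g$ ideal. Each induced graph on $V_a\cup V_r$ is Herzog-Hibi complete and therefore satisfies the Herzog-Hibi conditions, so $G$ falls under Theorem \ref{mainTheorem2} and $S/I$ is Cohen-Macaulay; the proof of that theorem also gives $\dim(S/I)=n$, whence $\operatorname{ht}(I)=rn-n=(r-1)n=g$. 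Finally the minimal generators of $I$ are in bijection with the edges of $G$, whose number is computed in the preceding remark to be $\binom{(r-1)n+1}{2}=\binom{g+1}{2}$.

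The content hidden inside \cite[Theorem 4.3.7]{Vi}, and the step I would want to make explicit, is why this maximal generator count forces linearity. I would pass to an Artinian reduction $\bar A$ of $S/I$ modulo a linear system of parameters; since $S/I$ is Cohen-Macaulay of dimension $n$ these parameters form a regular sequence, so $\bar A$ and $S/I$ have the same graded Betti numbers and it suffices to prove that $\bar A$ has a linear resolution. The $h$-vector of $S/I$ is the Hilbert function of $\bar A$, and a short computation with the Hilbert series $(\sum_i h_i t^i)/(1-t)^n$, using $\dim_{\sf k}(S/I)_1=rn$ and $\dim_{\sf k}(S/I)_2=\binom{rn+1}{2}-\binom{g+1}{2}$, yields $h=(1,g,0,0,\dots)$. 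As $\bar A$ is a standard graded algebra generated in degree $1$, this means $\bar A\cong {\sf k}[y_1,\dots,y_g]/\mathfrak{n}^2$, where $\mathfrak{n}=(y_1,\dots,y_g)$.

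Since $\mathfrak{n}^2$ is a power of the graded maximal ideal of a polynomial ring, it has a linear resolution, hence so does $\bar A$, and transporting this back along the regular sequence shows that $S/I$ has a linear resolution. The only real obstacle is the numerical bookkeeping that ties $g=\operatorname{ht}(I)$ to the edge count $\binom{g+1}{2}$ and confirms the $h$-vector is $(1,g,0,\dots)$; once that coincidence is in hand the linearity is forced, and citing \cite[Theorem 4.3.7]{Vi} lets one bypass the Artinian-reduction computation entirely and simply record that $G$ is Cohen-Macaulay with an edge ideal of height $g$ attaining the maximal number of quadratic generators.
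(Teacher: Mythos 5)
Your proposal is correct and follows essentially the same route as the paper: the paper's entire argument consists of the edge count $\binom{(r-1)n+1}{2}$ from the preceding remark, the observation that $I$ is Cohen--Macaulay (Theorem \ref{mainTheorem2}) of height $(r-1)n$, and the citation of \cite[Theorem 4.3.7]{Vi}. Your additional unpacking of that citation via the Artinian reduction, the $h$-vector computation $(1,g,0,\dots)$, and the linear resolution of $\mathfrak{n}^2$ is sound (modulo the standard harmless assumption that ${\sf k}$ is infinite so a linear system of parameters exists) and goes beyond what the paper records, but it does not change the approach.
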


\end{document}